\documentclass[reqno,12pt,a4paper]{amsart}

\usepackage[utf8]{inputenc}
\usepackage{enumerate}
\usepackage{amstext,amsmath,amsthm,amsfonts,amssymb,amscd}
\usepackage{latexsym,mathrsfs,dsfont,euscript}

\usepackage{fullpage}

\usepackage{xcolor}%color

\usepackage{hyperref} % hace links referencias a todos los labels
\hypersetup{
  colorlinks,%
  citecolor=blue,%
    filecolor=red,%
    linkcolor=red,%
    urlcolor=blue
}

\usepackage{graphicx}
\usepackage{caption}
    \usepackage{subcaption}

\usepackage{listings}

\definecolor{codegreen}{rgb}{0,0.6,0}
\definecolor{codegray}{rgb}{0.5,0.5,0.5}
\definecolor{codepurple}{rgb}{0.58,0,0.82}
\definecolor{backcolour}{rgb}{1,1,1}%color blanco
%\definecolor{backcolour}{rgb}{0.95,0.95,0.92}%color gris claro

\lstdefinestyle{mystyle}{
    backgroundcolor=\color{backcolour},
    commentstyle=\color{codegreen},
    keywordstyle=\color{magenta},
    numberstyle=\tiny\color{codegray},
    stringstyle=\color{codepurple},
    basicstyle=\ttfamily\footnotesize,
    breakatwhitespace=false,
    breaklines=true,
    captionpos=b,
    keepspaces=true,
    numbers=none,
    numbersep=5pt,
    showspaces=false,
    showstringspaces=false,
    showtabs=false,
    tabsize=2
}

\lstset{style=mystyle}

\newtheorem{theorem}{Theorem}%[section]
%[section]
\newtheorem{proposition}[theorem]{Proposition}
\newtheorem{lemma}[theorem]{Lemma}

\theoremstyle{definition}

\theoremstyle{remark}

\numberwithin{equation}{section}

\newcommand{\abs}[1]{\left\vert#1\right\vert}

\allowdisplaybreaks

\linespread{1.3}%one-and-a-half spacing %1.6 double

%\linenumbers %n�meros de l�neas

\title{On Frink's type metrization of weighted graphs}

%%    Information for authors
%
\author[]{Mar\'ia Florencia Acosta}
\email{mfacosta@santafe-conicet.gov.ar}
\author[]{Hugo Aimar}
\email{haimar@santafe-conicet.gov.ar}
\author[]{Ivana G\'{o}mez}
\email{ivanagomez@santafe-conicet.gov.ar}
\thanks{This work was supported by the Ministerio de Ciencia, Tecnolog\'ia e Innovaci\'on-MINCYT in Argentina: CONICET and ANPCyT; and the UNL}
%
%\subjclass[2010]{Primary }

\keywords{Metrization; uniform spaces; weighted graphs.}

\begin{document}

\begin{abstract}
Using the technique of the metrization theorem of uniformities with countable bases, in this note we provide, test and compare an explicit algorithm to produce a metric $d(x,y)$ between the vertices $x$ and $y$ of an affinity weighted undirected graph.
\end{abstract}

\maketitle

\section{Introduction}

The construction of metrics in data sets is a problem of current interest in data analysis. Of course the
metrics built on a given data set should reflect, in a quantitative form, the affinity of the different data points. There are many reasons for the search of such metric structures on data sets. In particular adequate metrics provide notions of neighborhood of a given point which are not provided a priori directly by the affinity. But more important is the fact that in metric spaces many of the properties of Euclidean spaces still hold and covering and partitions can be done with a metric control which is natural for each setting.

Perhaps the best known metrization method is that of diffusive metrics due to Coifman and Laffon \cite{CoifmanLafon06}. Once a Laplace type operator is built from the affinity matrix between data, the spectral analysis of this operator provides a diffusion kernel which gives a family of metrics on the data set at different times. The size of the eigenvalues allows the detection of the main features of and hence the approximation of a high
dimensional space by another space with small dimension. In pure mathematics the problem of metrization of general topological spaces is old and well known. In particular, the metrization of the topology induced on a set $X$ by a uniformity on $X\times X$ was considered and solved in \cite{Frink37}, see also \cite{Chi27} and \cite{Kelleybook75} when the uniform structure has a countable basis. The result is that a topology induced by a uniform structure is  metrizable if an only if the uniformity has a countable basis. Even when so stated the results seems to have a qualitative character its proof entails a quantitative lemma due to Frink that allows to obtain a metric from the affinity going through the uniform structure induced by the affinity between the data points.

The first use of this quantitative lemma is due to Macias and Segovia (\cite{MaSe79Lip}) in order to show that quasi-distances are equivalent to powers of metrics. In \cite{AiGoAffinity} sufficient conditions on a general affinity kernel $K$ on an abstract set $X$ are given in order to  obtain a Newton type potential form for $K$ in terms of a natural metric on $X$. Loosely speaking \cite{AiGoAffinity} shows that, with a quantitative transivity hypothesis, we have that $K(x,y)=\varphi(d(x,y))$ for some ``metric'' $d$ and some quasi-convex decreasing function $\varphi$ defined on the positive real numbers.

In this note we aim to provide, test and compare an explicit algorithm in order to obtain a metric type function $d(x,y)$ between the vertices $x$ and $y$ associated to an affinity weighted graph. The algorithm gives actually a uniform family of metrics that provide together a profuse enough family of balls.

The second section of this note is devoted to state and prove the main result as a consequence of Frink's Lemma as stated and proved in \cite{Kelleybook75}. Section~\ref{sec:algorithm} describes the algorithm for the case of finite $X$. In Section~\ref{sec:TestComparison} we test and compare the algorithm in some special weighted graphs

\section{Pseudometrization of affinity kernels and weighted undirected graphs through Frink's Lemma}\label{sec:pseudometrization}

Even when the problem is motivated by the finite setting provided by weighted graphs, the basic theory does not need any assumption regarding cardinality. Hence, in this section, we assume that $X$ is a set and $K: X\times X\to [0,\infty)$ is a nonnegative function such that for $x$ and $y$ in $X$, $K(x,y)$ is a measure of affinity between $x$ and $y$.

A pseudo-metric on the set $X$ is a function $d:X\times X\to [0,\infty)$ such that
\begin{itemize}
\item[(p-m.1)] $d(x,x)=0$ for every $x\in X$;
\item[(p-m.2)] $d(x,y)=d(y,x)$, $x, y\in X$;
\item[(p-m.3)] $d(x,z)\leq d(x,y)+d(y,z)$ for every $x, y, z\in X$.
\end{itemize}
A pseudo-metric is a metric if $d(x,y)=0$ only when $x=y$.

Let us now proceed to state Frink's Lemma as given in Chapter~6 of Kelley's book \cite{Kelleybook75}. Some notation to simplify further statements is in order. With $\triangle$ we denote the diagonal of $X\times X$. In other words $\triangle=\{(x,x): x\in X\}$. Given a subset $U$ of $X\times X$ we write $U^{-1}$ to denote the set $\{(x,y)\in X\times X: (y,x)\in U\}$. We say that $U$ is symmetric if $U=U^{-1}$. Given two subsets $U$ and $V$ of $X\times X$, the composition is defined by $V\circ U = \{(x,z)\in X\times X: \textrm{there exist } y\in X \textrm{ with } (x,y)\in U \textrm{ and } (y,z)\in V \}$.

\begin{lemma}\label{lemma:Frink}
Let $X$ be a set and let $\{U_m: m=0,1,2,\ldots\}$ be a sequence of subsets of $X\times X$ satisfying the following properties
\begin{enumerate}[i)]
\item $U_0 = X\times X$;
\item $U_n=U_n^{-1}$ for every $n$;
\item $\triangle\subset U_n$ for every $n$;
\item $U_{n+1}\circ U_{n+1}\circ U_{n+1}\subseteq U_n$ for every $n$.
\end{enumerate}
Then, there exist a pseudo-metric $d$ defined on $X$ such that for every $n=1,2,3,\ldots$
\begin{equation*}
U_n\subset \{(x,y)\in X\times X: d(x,y)<2^{-n}\}\subset U_{n+1}.
\end{equation*}
\end{lemma}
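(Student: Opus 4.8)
The standard approach is to first build a candidate "gauge" function directly from the sets $U_n$, then symmetrize it via chains to obtain the pseudo-metric. Concretely, I would define $f(x,y) = 2^{-n}$ if $(x,y) \in U_n \setminus U_{n+1}$ for some $n \ge 0$, and $f(x,y) = 0$ if $(x,y) \in \bigcap_n U_n$. Property (iii) guarantees $f(x,x) = 0$, property (ii) that $f$ is symmetric, and property (i) that $f$ is everywhere defined with $f \le 1$. The function $f$ itself need not satisfy the triangle inequality, but the nesting condition (iv) gives it an approximate (quasi-)triangle inequality: if $(x,y) \in U_{n+1}$ and $(y,z) \in U_{n+1}$ then $(x,z) \in U_{n+1} \circ U_{n+1} \subseteq U_n$ (using (iii) to pad the triple composition), hence $f(x,z) \le 2 \max\{f(x,y), f(y,z)\}$, or some fixed multiple thereof.

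Next I would pass from $f$ to $d$ by the chain construction: set
\[
d(x,y) = \inf \left\{ \sum_{i=0}^{k-1} f(x_i, x_{i+1}) : x_0 = x,\ x_k = y,\ x_i \in X \right\},
\]
the infimum over all finite chains from $x$ to $y$. By construction $d$ is nonnegative, symmetric, vanishes on the diagonal, and satisfies the triangle inequality (concatenation of chains), so $d$ is a pseudo-metric. The crux is the comparison between $d$ and $f$: one must show $f(x,y)/2 \le d(x,y) \le f(x,y)$ (the right inequality being trivial, taking the one-step chain). The left inequality is proved by induction on the length $k$ of the chain, using the approximate triangle inequality for $f$ — this is exactly the quantitative heart of Frink's argument, and it is the step I expect to be the main obstacle, since the naive induction loses a factor at each step and one has to be careful to split any chain at a "midpoint" where the partial sums are balanced, so that only a single factor of $2$ is lost overall.

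Finally, I would translate the estimate $f/2 \le d \le f$ into the claimed sandwich of $U_n$ between sublevel sets of $d$. If $(x,y) \in U_n$ with $n \ge 1$, then either $(x,y) \in \bigcap_m U_m$, giving $d(x,y) = 0 < 2^{-n}$, or $(x,y) \in U_m \setminus U_{m+1}$ for some $m \ge n$, giving $d(x,y) \le f(x,y) = 2^{-m} \le 2^{-n}$; in either case $d(x,y) < 2^{-n}$ after a harmless adjustment of the constant or by noting $2^{-m} < 2^{-n}$ unless $m = n$ in which case one uses $d \le f/1$ against the strict bound — here I would absorb the edge case by working with $U_{n}$ versus $d < 2^{-n}$ and $d < 2^{-n}$ versus $U_{n+1}$ and checking the constants line up, possibly replacing $f$ by $3^{-n}$ or rescaling so the factor-$2$ loss is swallowed. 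Conversely, if $d(x,y) < 2^{-n}$ then $f(x,y) \le 2 d(x,y) < 2^{-n+1}$, so $(x,y) \notin U_m \setminus U_{m+1}$ for any $m \le n-1$, whence $(x,y) \in U_n \subseteq U_{n+1}$. Assembling these two inclusions yields $U_n \subset \{d < 2^{-n}\} \subset U_{n+1}$, completing the proof.
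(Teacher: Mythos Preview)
Your overall strategy is exactly the classical Frink--Kelley chain construction, and indeed the paper does not give its own proof of this lemma at all: it simply quotes the statement from Chapter~6 of Kelley's \emph{General Topology} and uses it as a black box. So as far as ``matching the paper'' goes, you have supplied precisely the argument the paper defers to.

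There is, however, a genuine slip in your final paragraph, and it is induced by a typographical error in the paper's statement. The conclusion of the lemma should read
\[
U_n \subset \{(x,y): d(x,y) < 2^{-n}\} \subset U_{n-1},
\]
not $\subset U_{n+1}$. (Note that condition~(iv) together with~(iii) forces $U_{n+1}\subset U_{n+1}^{(3)}\subset U_n$, so the $U_n$ are \emph{decreasing}; and the paper's own Theorem in the same section uses the inclusion in the form $\{d_\lambda<2^{-i}\}\subset\{K>\lambda(i-1)\}$, i.e.\ with index shifted \emph{down}.) Your computation in the last step is fine up to the point where you obtain $f(x,y)\le 2d(x,y)<2^{-(n-1)}$ and hence $(x,y)\in U_n$; but then you write ``$(x,y)\in U_n\subseteq U_{n+1}$'', which is false --- the containment goes the other way. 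What your argument actually establishes is $\{d<2^{-n}\}\subset U_{n-1}$ (in fact $\subset U_n$, using that $f$ takes only dyadic values), which is the correct form of the lemma.

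Your hedging about the strict inequality in the first inclusion is also reasonable: from $d\le f$ one gets only $d(x,y)\le 2^{-n}$ when $(x,y)\in U_n$, and one does need a small device (e.g.\ reading Kelley's version with $\le$, or a rescaling) to match the strict $<$ exactly. That is a cosmetic issue, not a gap in the idea.
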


The above control of the given sequence $\{U_n: n=0,1,2,\ldots\}$ by the level sets of the pseudo-metric $d$ seems to be of qualitative character. Nevertheless, when the sequence $U_n$ is itself given by level sets of some function $K$ on $X\times X$, this control becomes quantitative and allows to find a natural notion of distance provided by $K$.

In the sequel, for a given subset $V$ of $X\times X$ we shall use $V^{(n)}$ to denote the composition $V\circ V\circ V\ldots\circ V$ $n$ times.

Let us now prove that under some mild conditions in $K$ it is possible to construct increasing sequences $\{\lambda(k): k=0,1,2,\ldots\}$ such that $U_{k+1}\circ U_{k+1}\circ U_{k+1}\subseteq U_k$ whenever $U_k=\{K>\lambda(k)\}$.
\begin{lemma}\label{lemma:sequenceLambda}
Let $X$ be a set and let $K$ be a nonnegative symmetric real function defined on $X\times X$ satisfying
\begin{enumerate}[a)]
\item $K(x,x)=\sup_{y\in X} K(x,y)$ for every $x\in X$;
\item $0<\Lambda_\infty=\sup\{\alpha>0:\, \{K>\alpha\}^{(m)}=X\times X \textrm{ for some integer } m\}\leq \infty$.
\end{enumerate}
Then, for every $\Lambda$ with $0<\Lambda<\Lambda_\infty$ there exists a finite sequence $0=\lambda(0)<\lambda(1)<\ldots<\lambda(k)=\Lambda$ such that
$\{K>\lambda(i)\}^{(3)}\subseteq \{K>\lambda(i-1)\}$ for every $i=1,2,\ldots,k$. Moreover,
$\triangle\subset \{K>\lambda(i)\}$ for every $i=0,1,2,\ldots,k$.
\end{lemma}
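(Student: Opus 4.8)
The plan is to read off two structural consequences of (a)--(b) and then to produce the thresholds by a finite downward recursion starting at $\Lambda$. Throughout, write $V_\alpha=\{K>\alpha\}$, and note that $\alpha\le\beta$ forces $V_\beta\subseteq V_\alpha$ and that composition is monotone in each variable. First I would unwind hypothesis (b): the set $S=\{\alpha>0:\ V_\alpha^{(m)}=X\times X\text{ for some integer }m\}$ is downward closed in $(0,\infty)$, since $V_\alpha^{(m)}=X\times X$ and $0<\beta\le\alpha$ give $V_\beta^{(m)}\supseteq V_\alpha^{(m)}=X\times X$; hence $S\supseteq(0,\Lambda_\infty)$ and in particular $\Lambda\in S$, so there is an integer $m$ with $V_\Lambda^{(m)}=X\times X$.

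The second structural point is where hypothesis (a) enters: $\triangle\subseteq V_\alpha$ for every $0\le\alpha<\Lambda_\infty$. Indeed, choosing $\beta\in(\alpha,\Lambda_\infty)\subseteq S$, the membership $(x,x)\in V_\beta^{(m_\beta)}$ produces, for each $x$, a chain $x=z_0,z_1,\dots,z_{m_\beta}=x$ with $K(z_j,z_{j+1})>\beta$, so by (a) $K(x,x)=\sup_{y}K(x,y)\ge K(x,z_1)>\beta>\alpha$, i.e. $(x,x)\in V_\alpha$. This already yields the ``moreover'' clause, since the thresholds to be constructed all satisfy $\lambda(i)\le\Lambda<\Lambda_\infty$ (and $\lambda(0)=0<\Lambda_\infty$). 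As a by-product I record that $\triangle\subseteq V$ makes the powers $V^{(n)}$ nondecreasing in $n$, because $V^{(n)}=V^{(n)}\circ\triangle\subseteq V^{(n)}\circ V=V^{(n+1)}$; in particular $V_\Lambda^{(n)}=X\times X$ for every $n\ge m$.

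For the construction I would build the $\lambda$'s downward from $\lambda(k)=\Lambda$: given a positive threshold $\mu$ already selected, set $t(\mu)=\inf\{K(x,z):(x,z)\in V_\mu^{(3)}\}$ and take the next, smaller threshold to be any number in $[0,\mu)\cap[0,t(\mu))$, choosing it to be $0$ as soon as that is admissible. Relabelling the thresholds so obtained as $0=\lambda(0)<\lambda(1)<\dots<\lambda(k)=\Lambda$, one has $V_{\lambda(i)}^{(3)}\subseteq V_{\lambda(i-1)}$ by construction, and iterating these inclusions gives $V_\Lambda^{(3^{k-i})}=V_{\lambda(k)}^{(3^{k-i})}\subseteq V_{\lambda(i)}$; hence once $3^{k-i}\ge m$ the level set $V_{\lambda(i)}$ has already grown to $X\times X$.

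The step I expect to be the main obstacle is precisely the termination of this recursion: one must check that each $t(\mu)$ is strictly larger than some admissible value below $\mu$ (so that the sequence genuinely increases) and that after finitely many steps the choice $\lambda(0)=0$ becomes legitimate --- equivalently, that the largest level set $V_0=\{K>0\}$ absorbs the triple composition $V_{\lambda(1)}^{(3)}$. It is here that the finiteness of the composition length $m$ furnished by (b), together with the nonvanishing of $K$ so that this largest level set is all of $X\times X$, are used in an essential way.
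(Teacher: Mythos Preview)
Your outline is essentially the paper's argument: both first observe that the set $S$ (the paper calls it $A$) is a downward-closed interval, then use (a) to get $\triangle\subseteq V_\alpha$ for $\alpha<\Lambda_\infty$, and then descend from $\Lambda_0:=\Lambda$ by taking at each stage (a value just below) $\sup\{\alpha>0:\,V_{\Lambda_{i-1}}^{(3)}\subseteq V_\alpha\}$, which is your $t(\mu)$. The termination you flag as ``the main obstacle'' is resolved in the paper by exactly the mechanism you already wrote down: iterating the inclusions gives $V_\Lambda^{(3^{j})}\subseteq V_{\Lambda_{j}}$, and since $V_\Lambda^{(m_\Lambda)}=X\times X$ with $m_\Lambda$ finite, after finitely many steps the corresponding $A_i$ is empty and one sets $\lambda(0)=0$. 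The strict decrease $\Lambda_i<\Lambda_{i-1}$ that you also worry about follows because $\triangle\subseteq V_{\Lambda_{i-1}}$ gives $V_{\Lambda_{i-1}}\subseteq V_{\Lambda_{i-1}}^{(3)}$, so any $\alpha\in A_i$ satisfies $V_{\Lambda_{i-1}}\subseteq V_\alpha$ and hence $\alpha\le\Lambda_{i-1}$; choosing $\Lambda_i$ strictly below $\sup A_i$ then does the job. So your concern is not really an obstacle---your own sketch is already the proof.

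One genuine issue, however, is your appeal to ``the nonvanishing of $K$ so that $V_0=\{K>0\}$ is all of $X\times X$''. That is \emph{not} among hypotheses (a)--(b), and the paper makes the same tacit move when it writes ``$\{K>\lambda(0)\}=X\times X$''. Under (a)--(b) alone $K$ may well vanish off the diagonal (take three points with $K\equiv 2$ on the diagonal, $K=1$ on adjacent pairs, and $K(1,3)=0$: both (a) and (b) hold, yet $\{K>0\}\ne X\times X$), and then the final inclusion $V_{\lambda(1)}^{(3)}\subseteq\{K>0\}$ can fail. In the finite, positive-entry setting where the algorithm is actually run this is harmless, but as a standalone lemma it needs $K>0$ as an extra hypothesis; you are right to have singled this point out.
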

\begin{proof}
Let us first notice that the set \(\displaystyle{A=\{\alpha>0:\{K>\alpha\}^{(m)}=X\times X \textrm{ for some in\-te\-ger\,} m\}}\) is an interval or the whole half line $\mathbb{R}^+$. This fact follows from the monotonicity of the level sets of $K$. In other words if $\alpha\in A$ and $0<\beta<\alpha$ then $\{K>\beta\}\supset\{K>\alpha\}$, so that $\{K>\beta\}^{(m)}\supset\{K>\alpha\}^{(m)}=X\times X$ and $\beta\in A$. On the other hand, for each $\alpha\in A$ we have that $\triangle\subset \{K>\alpha\}$. This follows from property $a)$ of the kernel $K$. In fact, if for some $x_0\in X$ we have $K(x_0,x_0)\leq\alpha$, then $\sup_{y\in X} K(x_0,y)\leq\alpha$ and for no $m\in\mathbb{N}$ the point $(x_0,x_0)$ would belong to $\{K>\alpha\}$. But since $\alpha\in A$, for some $m$, $\{K>\alpha\}^{(m)}=X\times X\supset\{(x_0,x_0)\}$.

Let us pick $0<\Lambda<\Lambda_\infty$. From the above remarks, we have that $\Lambda\in A$ and $\triangle\subset\{K>\Lambda\}$. Set $m_\Lambda=\min \{m\in\mathbb{N}: \{K>\Lambda\}^{(m)}=X\times X \}$. In other words, $\{K>\Lambda\}^{(m_\Lambda)}=X\times X$ but $\{K>\Lambda\}^{(m_\Lambda -1)}\subsetneqq X\times X$. We may assume that $m_{\Lambda}\geq 3$. Now, consider the set $A_1=\{\alpha>0: \{K>\Lambda\}^{(3)}\subseteq \{K>\alpha\} \}$. If $A_1=\emptyset$, the sequence that we are looking for has only two elements $\lambda(0)=0$ and $\lambda(1)=\Lambda$. And the desired inclusion $\{K>\lambda(1)\}^{(3)}\subseteq X\times X=\{K>\lambda(0)\}$ holds trivially. If $A_1\neq \emptyset$ take $\Lambda_1\in A_1$ with $\Lambda_1 > \sup A_1 -\varepsilon$ for some fixed as small as desired and positive $\varepsilon$. Set now $A_2=\{\alpha>0: \{K>\Lambda_1\}^{(3)}\subseteq \{K>\alpha\}\}$. If $A_2=\emptyset$, then we are done with $\lambda(0)=0$, $\lambda(1)=\Lambda_1$ and $\lambda(2)=\Lambda$. So may keep iterating this selection process by choosing $\lambda_i\in A_i=\{\alpha>0:\{K>\Lambda_{i-1}\}^{(3)}\subseteq\{K>\alpha\}\}$ with $\Lambda_i>\sup A_i -\varepsilon$. Since for $\{K>\Lambda\}^{(m_\Lambda)}=X\times X$, after at most the integer part of $m_\Lambda/3$ plus one iterations the process stops providing a finite sequence of levels $\Lambda_0:=\Lambda>\Lambda_1>\Lambda_2>\ldots >\Lambda_k$. Taking $\lambda(i)=\Lambda_{k-i}$ for $i=0,1,\ldots,k$ we get the desired result.
\end{proof}

Let us point out that for discrete settings or for continuous kernels $K$ the choice of the sequence $\Lambda_i$ in the argument above can be accomplished by taking the maximum of each $A_i$.
Hence the $\varepsilon$-approximation argument is not necessary. From the above two lemmas we are in position to state and prove the main results of this section.
\begin{theorem}
Let $X$ be a set. Let $K$ be a nonnegative symmetric function defined on $X\times X$ satisfying $a)$ and $b)$ in Lemma~\ref{lemma:sequenceLambda}. Then for every sequence $\lambda=\{\lambda(i): i=0,1,\ldots,k=k(\lambda)\}$ as in Lemma~\ref{lemma:sequenceLambda}, there exists a pseudo-metric $d_\lambda$ defined on $X$ such that
\begin{enumerate}[1)]
\item $\{K>\lambda(i)\}\subseteq \{d_\lambda<2^{-i}\}\subseteq \{K>\lambda(i-1)\}$ for every $i=1,2,\ldots,k$;

\item the function
\begin{equation*}
\delta_\lambda = 2^{-\lambda^{-1}\circ K},
\end{equation*}
with $\lambda^{-1}$ the inverse of any increasing extension of $\lambda(i)$ to the whole interval $[0,k(\lambda)]$, is equivalent to the pseudo-metric $d_\lambda$ with constants that are uniform in $\lambda$. Precisely,
\begin{equation*}
\frac{\delta_\lambda (x,y)}{4} < d_\lambda(x,y)\leq 2 d_\lambda(x,y).
\end{equation*}
\end{enumerate}
\end{theorem}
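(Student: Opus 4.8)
The strategy is to feed Frink's Lemma a tower of neighbourhoods of the diagonal whose members are exactly the super-level sets $\{K>\lambda(i)\}$ furnished by Lemma~\ref{lemma:sequenceLambda}, read statement 1) directly off the conclusion of that lemma, and deduce 2) by unwinding 1) through the definition of $\delta_\lambda$.

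For 1) I would set $U_0=X\times X$, $U_i=\{K>\lambda(i)\}$ for $1\le i\le k$, and $U_i=\triangle$ for $i>k$, and check the four hypotheses of Lemma~\ref{lemma:Frink}. Hypotheses i)--iii) are immediate: each $U_i$ is symmetric because $K$ is symmetric, and contains $\triangle$ by the last assertion of Lemma~\ref{lemma:sequenceLambda} (or trivially, for $i>k$). For hypothesis iv), when $1\le n\le k-1$ it is precisely the inclusion $\{K>\lambda(n+1)\}^{(3)}\subseteq\{K>\lambda(n)\}$ granted by Lemma~\ref{lemma:sequenceLambda}; when $n=k$ it reads $\triangle^{(3)}=\triangle\subseteq\{K>\lambda(k)\}$, again true by Lemma~\ref{lemma:sequenceLambda}; and when $n=0$ or $n>k$ it holds trivially. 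Frink's Lemma then produces a pseudo-metric $d_\lambda$ whose dyadic sub-level sets are trapped between consecutive members of the tower, and reading this at the indices $i=1,\dots,k$ yields the chain $\{K>\lambda(i)\}\subseteq\{d_\lambda<2^{-i}\}\subseteq\{K>\lambda(i-1)\}$ of 1). It is worth recording what the same lemma gives beyond $k$: since $U_i=\triangle$ there, one gets $\{d_\lambda<2^{-n}\}\subseteq\triangle$ for $n$ large --- that is, $d_\lambda$ is in fact a metric --- together with an inclusion of the form $\{d_\lambda<2^{-n_0}\}\subseteq\{K>\Lambda\}$ for a suitable index $n_0$; both facts will be used at the top scale in 2).

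For 2) I would rewrite 1) as the two pointwise implications, valid for the relevant values of $i$,
\[
K(x,y)>\lambda(i)\ \Longrightarrow\ d_\lambda(x,y)<2^{-i},
\qquad
K(x,y)\le\lambda(i)\ \Longrightarrow\ d_\lambda(x,y)\ge 2^{-(i+1)},
\]
the second being the contrapositive of the right-hand inclusion. Now fix $x,y$, put $t=\lambda^{-1}(K(x,y))$ so that $\delta_\lambda(x,y)=2^{-t}$, and set $m=\lfloor t\rfloor$. Monotonicity of the chosen increasing extension $\tilde\lambda$ of $\lambda$ gives $\lambda(m)\le K(x,y)<\lambda(m+1)$; in the generic situation $m<t<m+1$ (so the left inequality is strict) the first implication gives $d_\lambda(x,y)<2^{-m}$, the second (applied at index $m+1$) gives $d_\lambda(x,y)\ge 2^{-(m+2)}$, and $t\in(m,m+1)$ gives $\delta_\lambda(x,y)\in(2^{-(m+1)},2^{-m})$. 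Assembling these three facts one obtains $\tfrac14\,\delta_\lambda(x,y)<d_\lambda(x,y)<2\,\delta_\lambda(x,y)$. The point is that the number of dyadic steps lost in this passage (one going up, two going down) is bounded independently of $\lambda$ and of $k$, so the constants $\tfrac14$ and $2$ are absolute; this is the asserted uniformity.

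The remaining cases are the boundary ones --- $t$ an integer (including $t=0$), or $t$ at or above the top level $k$ (with $\lambda^{-1}$ read, say, as the constant $k$ beyond $\Lambda$) --- and the bookkeeping they require is the only place where some care is genuinely needed. When $t=m$ is an integer one has $K(x,y)=\lambda(m)$ and must apply the two implications at the shifted indices $m-1$ and $m$; near and above the top scale one supplements them by the two facts recorded at the end of the discussion of 1), namely the inclusion $\{d_\lambda<2^{-n_0}\}\subseteq\{K>\Lambda\}$ for the upper estimate at the coarsest scale and the point-separation $\{d_\lambda<2^{-n}\}\subseteq\triangle$ for the lower one (together with the boundedness of $d_\lambda$ coming from Frink's construction). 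In every case the same two-sided estimate $\tfrac14\delta_\lambda\le d_\lambda\le 2\delta_\lambda$ closes with the same absolute constants. Getting the index shifts in 1) exactly right, and checking that these boundary estimates still close with the constants $\tfrac14$ and $2$, is the crux; everything else follows the routine pattern of the generic case above.
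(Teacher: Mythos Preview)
Your proposal is correct and follows essentially the same route as the paper: feed the super-level sets $U_i=\{K>\lambda(i)\}$ into Frink's Lemma to get 1), then combine the two inclusions in 1) with the dyadic localization to get the constants $\tfrac14$ and $2$ in 2). The only substantive difference is the direction in which you run part 2): the paper fixes $(x,y)$, locates $d_\lambda(x,y)$ in a dyadic shell $2^{-(i+1)}\le d_\lambda<2^{-i}$, and deduces $i-1<(\lambda^{-1}\circ K)(x,y)\le i+1$; you instead locate $t=\lambda^{-1}(K(x,y))$ between consecutive integers and bound $d_\lambda$. Both computations are the same bookkeeping read from opposite ends and yield the same constants.

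Your explicit extension $U_i=\triangle$ for $i>k$ is a genuine improvement in presentation: Frink's Lemma as stated requires an infinite sequence, and the paper simply writes ``the sequence $U_i=\{K>\lambda(i)\}$ satisfies i)--iv)'' without saying how the finite list is continued. Your choice works and, as you note, even upgrades $d_\lambda$ to an actual metric. Your more careful treatment of the boundary cases ($t$ integral, $t\ge k$, $K(x,y)>\Lambda$) is likewise more thorough than the paper, which silently uses $\lambda(i-1)$ and $\lambda(i+1)$ at the endpoints without comment.
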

\begin{proof}
From Lemma~\ref{lemma:sequenceLambda} the sequence $U_i=\{K>\lambda(i)\}$ satisfies $i)$ to $iv)$ of Lemma~\ref{lemma:Frink}. Hence there exists a pseudo-metric $d_\lambda$ defined on $X$ such that $1)$ holds. In order to prove $2)$ take $(x,y)\in X\times X$ such that $d_\lambda(x,y)>0$. Hence for some $i=0,1,\ldots,k(\lambda)$ we have
\begin{equation*}
2^{-(i+1)}\leq d_\lambda(x,y)<2^{-i}.
\end{equation*}
The inequality $d_\lambda(x,y)<2^{-i}$ and the second inclusion in $1)$ shows that $K(x,y)>\lambda(i-1)$. The inequality $2^{-(i+1)}\leq d_\lambda(x,y)$ and the first inclusion in $1)$ shows that $K(x,y)\leq\lambda(i+1)$. If $\lambda$ is any strictly increasing extension of the sequence $\lambda(i)$ for $i=0,\ldots,k$ to the interval $[0,k]$ and $\lambda^{-1}$ denote its inverse function, we have that $2^{-(i+1)}\leq d_\lambda(x,y)<2^{-i}$, and
\begin{equation*}
i-1<(\lambda^{-1}\circ K)(x,y)\leq i+1.
\end{equation*}
From this inequalities it readily follows that $\delta_\lambda= 2^{-\lambda^{-1}\circ K}$ is equivalent to $d_\lambda$. In fact,
\begin{equation*}
\frac{1}{4} = 2^{-(i+1)} 2^{i-1} < d_\lambda(x,y) 2^{(\lambda^{-1}\circ K)(x,y)}\leq 2^{-i}2^{i+1}=2.
\end{equation*}
\end{proof}

Let us point out that the function $\delta_\lambda$ in the above result satisfies a triangle type inequality with triangular constant equal to $8$ no matter what the kernel $K$ or the sequence $\lambda$, satisfying Lemma~\ref{lemma:sequenceLambda}, are. In fact,
\begin{equation*}
\delta_\lambda(x,z)\leq 4 d_\lambda(x,z)\leq 4 (d_\lambda(x,y)+d_\lambda(y,z))\leq 8(\delta_\lambda(x,y)+\delta_\lambda(y,z))
\end{equation*}
for every $x$, $y$ and $z\in X$.

Regarding the extension of $\lambda$ in order to produce the function $\lambda^{-1}$ needed to explicitly give the quasi-metric $\delta_\lambda$, let us  observe that two extremal cases can be explicitly given. In fact, let $\overline{\lambda}^{-1}:[0,\lambda(k)]\to [0,k]$ with $\overline{\lambda}^{-1}(t)=i$ for $\overline{\lambda}(i-1)<t\leq\overline{\lambda}(i)$ and $i=1,\ldots,k$. Also $\overline{\lambda}^{-1}(0)=0$. Another possible $\lambda^{-1}$ is a lower case $\underline{\lambda}^{-1}:[0,\lambda(k)]\to [0,k-1]$ given by
$\underline{\lambda}^{-1}(t)=i-1$ for $\underline{\lambda}(i-1)<t\leq \underline{\lambda}(i)$ for $i=1,\ldots,k$.

It is also worth noticing that Frink's metric and hence also $\delta_\lambda$, do not reflect the scaling factor associated to the choice of $\Lambda$ in Lemma~\ref{lemma:sequenceLambda}. This is due to the fact that Frink's metric $d_\lambda$ takes only values between zero and one. So that, being $\delta_\lambda$  equivalent to $d_\lambda$, also our quasi-metric $\delta_\lambda$ is bounded.

The sequence $\lambda(i)$ contains also the information of a family of $\delta_\lambda$ balls defined directly as level sets of the affinity kernel $K$.

\begin{proposition}\label{propo:deltalambdaBalls}
For $0<r<1$ we have that the open $\delta_\lambda$ ball centered at $x\in X$ with radious $r$, is given by
\begin{equation*}
B_{\delta_\lambda}(x,r)=\{y\in X: K(x,y)>\lambda(\log_2\tfrac{1}{r})\}.
\end{equation*}
\end{proposition}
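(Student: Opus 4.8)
The plan is to unwind the definition of the open $\delta_\lambda$ ball directly using the explicit formula $\delta_\lambda = 2^{-\lambda^{-1}\circ K}$ and then to translate the resulting inequality on $\lambda^{-1}\circ K$ into an inequality on $K$ by applying the increasing function $\lambda$ (equivalently, by monotonicity of $\lambda^{-1}$). First I would fix $x\in X$ and $0<r<1$ and write, for $y\in X$,
\begin{equation*}
y\in B_{\delta_\lambda}(x,r)\iff \delta_\lambda(x,y)<r \iff 2^{-(\lambda^{-1}\circ K)(x,y)}<r \iff (\lambda^{-1}\circ K)(x,y)>\log_2\tfrac{1}{r},
\end{equation*}
where the last step uses that $t\mapsto 2^{-t}$ is strictly decreasing and that $-\log_2 r=\log_2\tfrac{1}{r}$. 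Then, since $\lambda$ is a strictly increasing extension of the sequence $\lambda(i)$ to $[0,k]$ and $\lambda^{-1}$ is its (strictly increasing) inverse, the inequality $(\lambda^{-1}\circ K)(x,y)>\log_2\tfrac{1}{r}$ is equivalent to $K(x,y)>\lambda(\log_2\tfrac{1}{r})$, which is exactly the claimed description of the ball.

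The one genuine point requiring care — and the place I expect the only real friction — is the domain and range bookkeeping for $\lambda^{-1}$, so that the equivalence $(\lambda^{-1}\circ K)(x,y)>s\iff K(x,y)>\lambda(s)$ is actually legitimate for $s=\log_2\tfrac{1}{r}$. Because $0<r<1$ we have $s>0$, and one must check that $s$ lies in the domain $[0,k]$ of $\lambda$ whenever the ball is nonempty, and handle the boundary behaviour of the two extremal extensions $\overline{\lambda}^{-1}$ and $\underline{\lambda}^{-1}$ described after the theorem (which are only nondecreasing, not strictly increasing). I would therefore phrase the argument using the strictly increasing extension $\lambda$ of the theorem, for which $\lambda^{-1}$ is a genuine order isomorphism, and note separately that for values of $r$ with $\log_2\tfrac{1}{r}$ outside the finite range the level set $\{K(x,\cdot)>\lambda(\log_2\tfrac{1}{r})\}$ is to be read as $X$ (when the argument is $\le 0$) or as $\varnothing$ (when it exceeds $k$), which is consistent with the ball since $\delta_\lambda$ takes values in $[2^{-k},1)\cup\{0\}$.

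Finally I would remark that this proposition is really just the ``ball form'' of part~$1)$ of the theorem: taking $r=2^{-i}$ recovers $B_{\delta_\lambda}(x,2^{-i})\supseteq\{y:K(x,y)>\lambda(i)\}$ and, via the equivalence of $\delta_\lambda$ with $d_\lambda$ and the inclusions in $1)$, the reverse-type control; the present statement simply makes the dependence on a continuous radius $r$ explicit by interpolating through the extended $\lambda$. No further estimates are needed beyond the elementary manipulation of logarithms and the monotonicity already recorded in the proof of the theorem.
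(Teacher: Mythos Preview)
Your proof is correct and follows exactly the same route as the paper: unwind $\delta_\lambda=2^{-\lambda^{-1}\circ K}$ and use the monotonicity of $\lambda$ to pass from $\delta_\lambda(x,y)<r$ to $K(x,y)>\lambda(\log_2\tfrac{1}{r})$. The paper's own proof is a single sentence stating this equivalence, so your additional discussion of domain bookkeeping and the connection to part~$1)$ of the theorem is extra care that the authors did not bother to include.
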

\begin{proof}
The inequality $K(x,y)>\lambda(\log_2\tfrac{1}{r})$ is equivalent to $\delta_\lambda(x,y)<r$ which defines $B_{\delta_\lambda}(x,r)$.
\end{proof}

Let us point out that the actual construction of the sequence $\lambda(i)$ will depend only on $K$ itself. Hence the $\delta_\lambda$ balls are strictly provided only by $K$.

\section{The algorithm for the explicit computation of the sequences $\lambda$. The finite case}\label{sec:algorithm}

In this section  we consider the case of $X=\{1,2,\ldots,n\}$ for some large integer $n$. The kernel $K$ defined on $X\times X$ can be regarded as an $n\times n$ symmetric matrix with positive entries $K_{ij}$. Since each $K_{ij}$ is positive the hypothesis $b)$ in Lemma~\ref{lemma:sequenceLambda} holds trivially since $\Lambda_\infty\geq \min K_{ij}>0$. Instead hypothesis $a)$ in Lemma~\ref{lemma:sequenceLambda} holds if $K_{ii}=\sup_j K_{ij}$.

In order to construct sequences $\lambda$, and then $\delta_\lambda$, associated to this matrix $K$ we shall need to deal in the algorithm with the composition of neighborhoods of the diagonal.

Let $U$ and $V$ be two subsets of $\{1,2,\ldots,n\}^{2}=X\times X$. Then, as before $V\circ U = \{(i,k): (i,j)\in U \textrm{ and } (j,k)\in V \textrm{ for some } j=1,2,\ldots,n\}$.
\begin{proposition}\label{propo:productmatrixA}
For a given $U\subseteq \{1,2,\ldots,n\}^n$ set $A_U=(a_{ij}(U))$ to denote the $n\times n$ rest matrix defined by $a_{ij}(U)=1$ of $(i,j)\in U$ and $a_{ij}(U)=0$ otherwise. Then the set $V\circ U$ is given by the non vanishing entries of the product matrix $A_U A_V$. Precisely
\begin{equation*}
V\circ U =\left\{(i,j)\in\left\{1,\ldots,n\right\}^2: \sum_{k=1}^n a_{ik}(U) a_{kj}(V)\geq 1\right\}.
\end{equation*}
\end{proposition}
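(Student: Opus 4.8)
The plan is to unwind the definitions on both sides and check the double inclusion. Recall that $(i,j)\in V\circ U$ means, by the definition recalled just before the statement, that there exists some index $k\in\{1,2,\ldots,n\}$ with $(i,k)\in U$ and $(k,j)\in V$. First I would translate this membership condition into a statement about the entries of the incidence (``rest'') matrices: $(i,k)\in U$ holds precisely when $a_{ik}(U)=1$, and $(k,j)\in V$ holds precisely when $a_{kj}(V)=1$; in all other cases the corresponding entry is $0$. Hence the existence of a witnessing index $k$ is exactly the existence of a $k$ for which the product $a_{ik}(U)\,a_{kj}(V)$ equals $1$ rather than $0$.

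Next I would connect this to the matrix product $A_U A_V$, whose $(i,j)$ entry is by definition $\sum_{k=1}^n a_{ik}(U)\,a_{kj}(V)$. Since every summand $a_{ik}(U)\,a_{kj}(V)$ is either $0$ or $1$ (being a product of two $0$/$1$ numbers), the sum is a nonnegative integer, and it is $\geq 1$ if and only if at least one summand equals $1$. Combining this with the previous paragraph: $\sum_{k=1}^n a_{ik}(U)\,a_{kj}(V)\geq 1$ if and only if there exists $k$ with $a_{ik}(U)=a_{kj}(V)=1$, which happens if and only if there exists $k$ with $(i,k)\in U$ and $(k,j)\in V$, which is the definition of $(i,j)\in V\circ U$. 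This chain of equivalences establishes the claimed set equality.

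There is essentially no obstacle here: the statement is a direct bookkeeping translation between the language of relations on a finite set and Boolean matrix multiplication, and the only thing one must be slightly careful about is the order of composition versus the order of matrix multiplication (the convention in the paper puts $U$ first and $V$ second in $V\circ U$, which matches $A_U A_V$ with $A_U$ on the left), and the fact that the entries of $A_UA_V$ can exceed $1$, so the correct characterization uses the inequality ``$\geq 1$'' rather than ``$=1$''. I would present the proof as the short string of iff's above, perhaps noting in one line that nonnegativity of the $0$/$1$ summands is what lets us pass from ``the sum is positive'' to ``some term is positive.''
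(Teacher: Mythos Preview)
Your proof is correct and follows essentially the same approach as the paper: both argue that $\sum_{k} a_{ik}(U)a_{kj}(V)\geq 1$ iff some term equals $1$, i.e.\ iff there exists $k$ with $(i,k)\in U$ and $(k,j)\in V$. Your version is simply more detailed, spelling out the nonnegativity of the summands and the order-of-composition convention that the paper leaves implicit.
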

\begin{proof}
Notice that $\sum_{k=1}^n a_{ik}(U) a_{kj}(V)\geq 1$ if and only there exists $k\in\{1,\ldots,n\}$ such that $a_{ik}(U)=1$ and $a_{kj}(V)=1$. In other words, if and only if $(i,k)\in U$ and $(k,j)\in V$, as desired.
\end{proof}

The next result is important at showing when the iterated composition of a neighborhood of the diagonal finally covers the whole space $\{1,2,\ldots,n\}^2$.
\begin{lemma}
Let $U$ be a set in $\{1,2,\ldots,n\}^2$ such that $U$ contains the three main diagonals of $\{1,2,\ldots,n\}^2$. Precisely, $(i,i-1)$, $(i,i)$ and $(i,i+1)$ belong  to $U$ for every $i=1,2,\ldots,n$. Then there exists $m$ such that $U^{(m)}=\{1,2,\ldots,n\}^2$.
\end{lemma}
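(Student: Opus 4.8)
The plan is to show that the condition on $U$ — namely that it contains the three central diagonals — forces the associated reachability relation on $\{1,\dots,n\}$ to be ``path-connected with steps of size at most one'', and that such steps suffice to link any two indices in at most $n-1$ moves. First I would translate the problem into the language of the rest matrix $A_U$ from Proposition~\ref{propo:productmatrixA}: by that proposition, $(i,j)\in U^{(m)}$ if and only if the $(i,j)$ entry of $A_U^m$ is at least $1$. So it is enough to exhibit an $m$ for which every entry of $A_U^m$ is strictly positive. The hypothesis gives $a_{i,i-1}(U)=a_{i,i}(U)=a_{i,i+1}(U)=1$ for all valid $i$, so $A_U$ dominates entrywise the tridiagonal $0$--$1$ matrix $T$ with ones on the main, sub- and super-diagonals; since all entries are nonnegative, $A_U^m \ge T^m$ entrywise, and it suffices to prove $T^m$ has all entries positive for some $m$.

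Next I would argue the positivity of $T^m$ directly by a path-counting / induction argument. The key observation is that $(T^m)_{ij}\ge 1$ precisely when there is a walk $i = k_0, k_1, \dots, k_m = j$ in $\{1,\dots,n\}$ with $\abs{k_{s+1}-k_s}\le 1$ at each step (including the possibility of staying put). Given any $i$ and $j$, one can walk from $i$ to $j$ by taking $\abs{i-j}\le n-1$ unit steps in the appropriate direction and then idling (using the main-diagonal entry $a_{kk}=1$) for the remaining steps. Hence for $m = n-1$ every entry of $T^m$ is at least $1$, so $A_U^{m}$ has all entries $\ge 1$, which by Proposition~\ref{propo:productmatrixA} is exactly the statement $U^{(m)} = \{1,\dots,n\}^2$. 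Taking $m=n-1$ completes the proof; one could also simply invoke ``for some $m$'' without optimizing.

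I expect the only mildly delicate point to be bookkeeping at the boundary indices $i=1$ and $i=n$, where the neighbor $(1,0)$ or $(n,n+1)$ does not exist; but this causes no trouble, since we only ever need to move \emph{toward} $j$, never off the ends of $\{1,\dots,n\}$, and the diagonal entries let us pad any walk to the required length. A secondary point worth stating cleanly is the monotonicity $A_U^m \ge T^m$ for nonnegative matrices, which follows by induction on $m$ from the fact that multiplying entrywise-dominating nonnegative matrices preserves entrywise domination. No symmetry of $U$ is needed for this lemma, only the inclusion of the three diagonals.
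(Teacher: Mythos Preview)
Your proof is correct and follows essentially the same approach as the paper: translate to the rest matrix $A_U$ via Proposition~\ref{propo:productmatrixA} and argue that its powers eventually have all entries positive. The paper phrases the induction as ``each power widens the band of positive diagonals by two,'' whereas you give a cleaner path-counting argument (with the intermediate tridiagonal $T$) that also yields the explicit bound $m=n-1$.
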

\begin{proof}
From the representation of $U$ in terms of the matrix $A_U$ and the current hypothesis in $U$ we have that the matrix $A_U$ has ones at least in the three main diagonals. In other words, $a_{i,j}\geq 0$, $a_{i,i}=a_{i-1,i}=a_{i,i+1}=1$. Then $A^2_{U}$ has positive values at least in the entries of the five diagonals $\triangle=\{(i,i): i=1,\ldots,n\}$, $\triangle^+_1=\{(i,i+1): i=1,\ldots,n-1\}$, $\triangle^{-}_1=\{(i-1,i):i=2,\ldots,n\}$, $\triangle^+_2=\{(i,i+2): i=1,\ldots,n-2\}$ and $\triangle^{-}_2=\{(i-2,i):i=3,\ldots,n\}$. Iteration of the above argument shows that the composition of $U$ becomes wider around the diagonal and after a finite number of compositions the set $\{1,\ldots,n\}^2$ is completely covered.
\end{proof}

We are now in position to describe the basic steps of an algorithm to find a sequence $\lambda(i)$ associate to the kernel $K$.

\bigskip
\noindent\textbf{Algorithm.} Let $K=(K_{ij})$ be a $n\times n$ symmetric matrix with positive entries.

\medskip
\noindent\textbf{Step 1.} Compute the minimum of the values of $K$ on
        the three main diagonals
        $\Lambda_0=\min\{K_{i-1,i}; K_{i,i};K_{i,i+1}: i=1,\ldots,n\}$,

\medskip
\noindent\textbf{Step 2.} Build the matrix
        $A_0=A_{\{(i,j): K_{ij}\geq \Lambda_0\}}$
        as in Proposition~\ref{propo:productmatrixA};

\medskip
\noindent\textbf{Step 3.} Compute $A_0^3$;

\medskip
\noindent\textbf{Step 4.} Define $U_0$ as the subset of those $(i,j)$ in $\{1,\ldots,n\}^2$ such that the entry in $(i,j)$ of $A_0^3$ is positive;

\medskip
\noindent\textbf{Step 5.} Find $\Lambda_1=\max\{\alpha:\{K\geq \alpha\}\supseteq U_0\}$;

\medskip
\noindent\textbf{Step 6.} Build the matrix $A_1 = A_{\{(i,j): K_{ij}\geq\Lambda_1\}}$ as in Proposition~\ref{propo:productmatrixA};

\medskip
\noindent\textbf{Step 7.} Compute $A_1^3$;

\medskip
\noindent\textbf{Step 8.} Define $U_1=\{(i,j): \textrm{ the entry } (i,j) \textrm{ of } A_1^3 \textrm{ is positive}\}$;

\medskip
\noindent\textbf{Step 9.} Find $\Lambda_2 = \max\{\alpha: \{K\geq \alpha\}\supseteq U_1\}$;

\medskip
\noindent\textbf{$\boldmath{\cdots}$}

\medskip
\noindent The iteration stops after a finite  number of steps so we get the sequence  $\Lambda_0, \Lambda_1, \ldots, \Lambda_k$. It is clear that $\Lambda_k<\Lambda_{k-1}<\cdots<\Lambda_2<\Lambda_1$. Without any  extra condition on $K$ it could happen that $\Lambda_0\leq\Lambda_1$. But if $\Lambda_0$ is larger than  all the entries of $K$ outside the three main diagonals we have
\begin{equation*}
\Lambda_k<\Lambda_{k-1}<\cdots<\Lambda_2<\Lambda_1<\Lambda_0
\end{equation*}

\medskip
\noindent\textbf{Step $k+1$.} Set $\lambda(i)=\Lambda_{k-i}$; $i=0,\ldots,k$;

\medskip
\noindent\textbf{Step $k+2$.} Compute a version of $\lambda^{-1}$;

\medskip
\noindent\textbf{Step $k+3$.} Define $\delta_\lambda(i,j)= 2^{-\lambda^{-1}(K_{ij})}$;

\medskip
\noindent\textbf{Step $k+4$.} Plot $\delta_\lambda$ balls
$B_{\delta_\lambda}(i,r) = \{j: K_{ij}>\lambda (\log_2\tfrac{1}{r})\}$ for $i$ fixed and $0<r<1$.

\bigskip

\noindent The script in \verb|Python| for this algorithm is the following.

\linespread{1}
\begin{lstlisting}[language=Python,caption= Algorithm in Python]%caption= Algorithm in Python
import numpy as np
import matplotlib.pyplot as plt
import networkx as nx

## Value of n
n=n
## Compute minimum of K
Kmin=np.amin(K)
## Compute Lambda_0
lambda_0=0

aux=np.zeros((n))
for i in range(n-1):
    ## Compare inner values main diagonals
    aux[i]=min(K[i,i],K[i,i+1])
## Compare the remaining values in the main diagonals
aux[n-1]=K[n-1,n-1]
lambda_0=min(aux)

## Define matrix A
A=np.zeros((n,n))

for i in range(n):
    for j in range(n):
        if K[i,j]>=lambda_0:
            A[i,j]=1

## Compute  B=A^3
B=(A.dot(A)).dot(A)

## Compute  Bpos
Bpos=np.zeros((n,n))
for i in range(n):
    for j in range(n):
        if B[i,j]>=1:
            Bpos[i,j]=1

## Compute C
C=K*Bpos

## Compute minimum of the positive values of C
auxC=np.max(K)
for i in range(n):
    for j in range(n):
        if C[i,j]>0:
            auxC=min(auxC,C[i,j])
lambda_1=auxC

## Iterate
## Variables

lambda_i=np.zeros((n))
lambda_i[0]=lambda_0
lambda_i[1]=lambda_1

A_i=np.zeros((n,n,n))
A_i[0,:,:]=A

B_i=np.zeros((n,n,n))
B_i[0,:,:]=B

Bpos_i=np.zeros((n,n,n))
Bpos_i[0,:,:]=Bpos

C_i=np.zeros((n,n,n))
C_i[0,:,:]=C

## While
h=1
while lambda_i[h]>Kmin:
    ## Define matrix A
    for i in range(n):
        for j in range(n):
            if K[i,j]>=lambda_i[h]:
                A_i[h,i,j]=1

    ## Compute B=A^3
    B_i[h,:,:]=(A_i[h,:,:].dot(A_i[h,:,:])).dot(A_i[h,:,:])

    ## Bpos
    for i in range(n):
        for j in range(n):
            if B_i[h,i,j]>=1:
                Bpos_i[h,i,j]=1

    ## Compute C
    C_i[h,:,:]=K*Bpos_i[h,:,:]

    ## Compute minimum of the positive values of C
    auxC=np.max(K)
    for i in range(n):
        for j in range(n):
            if C_i[h,i,j]>0:
                auxC=min(auxC,C_i[h,i,j])
    lambda_i[h+1]=auxC
    h+=1

## End while

## Rearranging Lambda
lambda_i=lambda_i[0:h+1]
lambda_i=lambda_i[::-1]

## Inverse function of Lambda
def lambda_funct_inv(t,lambd):
    if t<0:
        print ('t must be larger or equal to the minimum value of lambda')
    if 0<=t<lambd[0]:
        inv=0
    for kk in range(len(lambd)-1):
        if lambd[kk]<=t<lambd[kk+1]:
            inv=kk+1
    if t>=lambd[len(lambd)-1]:
        inv=len(lambd)
    return inv

## Compute the matrix
def dist_frink_inv(nodo1,nodo2):
    distFinv=2**(-lambda_funct_inv(K[nodo1,nodo2],lambda_i))
    return distFinv

dist_array_Finv=np.zeros((n, n))
for v in range(n):
    for w in range(n):
        dist_array_Finv[v,w]=dist_frink_inv(v,w)

## Construct the graph starting from K
G = nx.Graph()
G = nx.from_numpy_matrix(np.matrix(K))

## Plot the graph
layout = nx.spring_layout(G)

plt.figure()
plt.title('Graph')
node_color=np.ones(n)
nx.draw(G, layout, node_color=node_color,with_labels=False)
nx.draw_networkx_labels(G, layout, font_size=12, font_family='sans-serif')
plt.show()

## Drawing balls centered at i
for k in range(n):
    for v in range(h+1):
        if dist_array_F[i][k] > lambda_i[v]:
            node_color[k]=h-v

node_color[i]=h+1
\end{lstlisting}

\linespread{1.3}

\section{Test and comparison with the diffusive metric for Newtonian type affinities}\label{sec:TestComparison}

The results in \cite{AiGoAffinity} suggest testing the algorithm on affinities defined as discretizations of Newtonian type potentials of the form
\begin{equation*}
K_\alpha(x,y)=\frac{1}{\abs{x-y}^\alpha}
\end{equation*}
for $\alpha$ positive. Once a discretization of $K_\alpha$ is given we may run our algorithm and also the well known diffusion metric introduced in \cite{CoifmanLafon06}. See also \cite{Bronstein}. Let us recall that the diffusive metric at time $t>0$ is given by
\begin{equation*}
d_t(i,j) = \left\{\sum_l e^{2t\nu_l}\abs{x_i^l - x^l}^2\right\}^{\tfrac{1}{2}}
\end{equation*}
where $x^l$, $\nu_l$, $l=1,\ldots,L$ are the eigenvectors and the eigenvalues of the Laplace operator on the graph with affinity given by the metric $K_{ij}$.

We shall only  write down the comparison of the families of $\delta_\lambda$-balls, $d_t$-balls and Euclidean balls for a couple of values of the radio, when we consider the discretization
\begin{equation*}
K_{ij} = % da error el delimitador, no entiendo..
  \begin{cases}
    2, & \text{for } i=j \\
    \abs{i-j}^{-\alpha}, & \text{for } i\neq j
  \end{cases}
\end{equation*}
with $i, j=0,\ldots,59$.

It is worthy pointing out at here that the choice of $60$ points of discretization is only taken for the sake of getting better images for the graphs. In particular for the visibility of some edges.

Let us also point out that in the following graphs, the numerical label of each vertex is assigned according to the order of the rows in the affinity matrix, but a priori has nothing to do with distance or affinity.

Figure~\ref{fig:grafo60nodes} labels with the integers $0,1,\ldots,59$ the $60$ vertices of our graph.

\begin{figure}[h!]
\includegraphics[width=7.5cm]{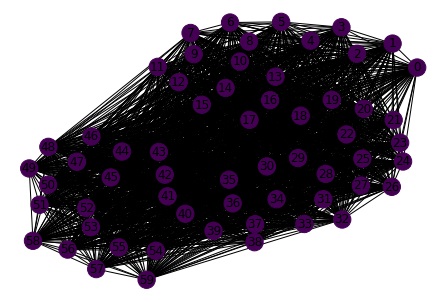}
\caption{Graph}\label{fig:grafo60nodes}
\end{figure}

We shall now plot some balls centered at two different vertices, $25$ and $50$, each for the three metrics, the Euclidean metric (E), the Diffusive metric (D) with $t= 0.005$ and Frink's metric (F). The comparison of both, (D) and (F) with the Euclidean (E) is essential because $K$ itself is built in terms of (E). Let us say again that we are interested in the shape of the balls but not in the particular radii for which those balls are attained. This fact is particulary clear in this case where the Euclidean metric is unbounded. Nevertheless we shall write out the values of the radii for which each ball in each metric is plotted. Actually the following pictures show in different colors the annuli between two consecutive balls. We use yellow for the center, green for the first annulus, turquoise for the second, lavender for the third and purple for the last annulus.

%\bigskip
%
%\bigskip
%
%\bigskip
%\begin{center}\textbf{Center at $50$}\end{center}

\begin{figure}[h]
%\begin{minipage}{0.5\textwidth}
    \begin{subfigure}[h]{0.45\textwidth}
\includegraphics[width=\textwidth]{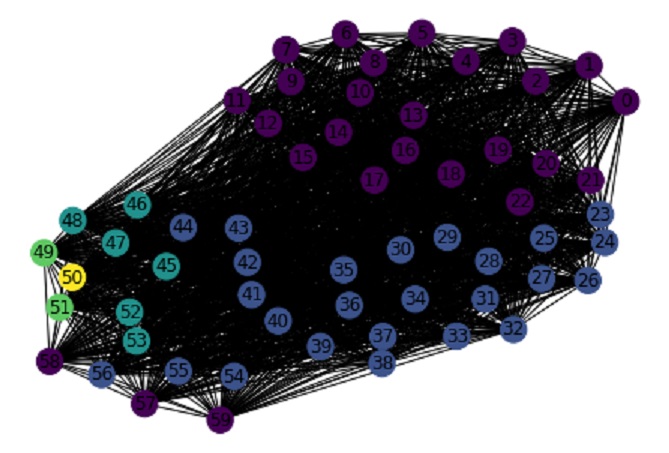}
\caption*{\textbf{(D)\,}\footnotesize{ Y, G, $0.11$, T, $0.135$, L, $0.31$, P, $0.404327$}}
%\end{figure}
%\end{minipage}
\end{subfigure}
    \begin{subfigure}[h]{0.45\textwidth}
%\begin{minipage}{0.5\textwidth}
%\begin{figure}[ht]
\includegraphics[width=\textwidth]{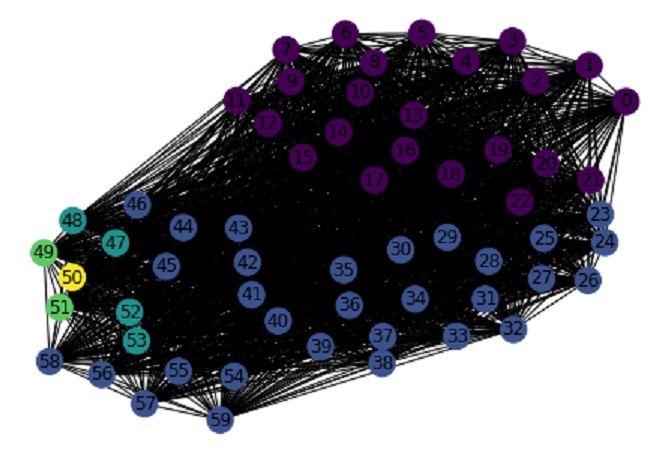}
\caption*{\textbf{(F)}\,\footnotesize Y, $0.0169492$, G, $0.037037$, T, $0.111111$, L, $0.333333$, P, $1$}
%\end{minipage}
\end{subfigure}
%\end{figure}
%
\newline
%\begin{figure}[ht]
    \begin{subfigure}[h]{0.45\textwidth}
\includegraphics[width=7.5cm]{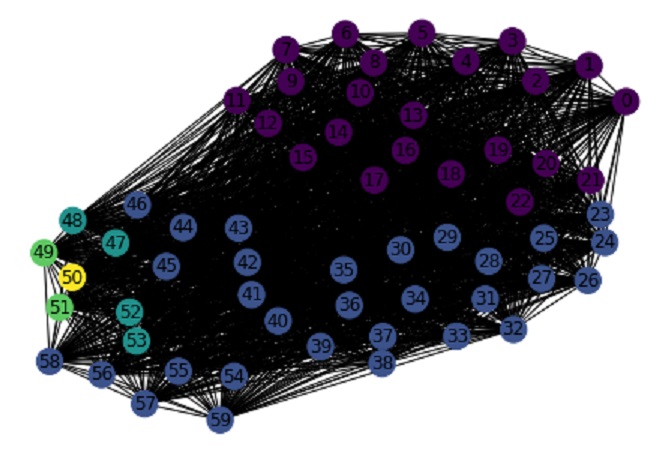}
\caption*{\textbf{(E)\,}\footnotesize Y, G, $1$, T, $3$, L, $27$, P, $59$}
\end{subfigure}
\caption{\textbf{Center at $50$}}\label{fig:BallsCenter50}
\end{figure}

In the Figure~\ref{fig:BallsCenter50} and Figure~\ref{fig:BallsCenter25} we use capital letters, $Y, G, T, L$ and $P$ for denote the colors. The sequences of letters and numbers describe the inner and outer radii of each annulus.

%
%\bigskip
%
%\bigskip
%
%\begin{center}\textbf{Center at $25$}\end{center}
%
%\medskip
\begin{figure}[h]
\begin{subfigure}[h]{0.45\textwidth}
\includegraphics[width=\textwidth]{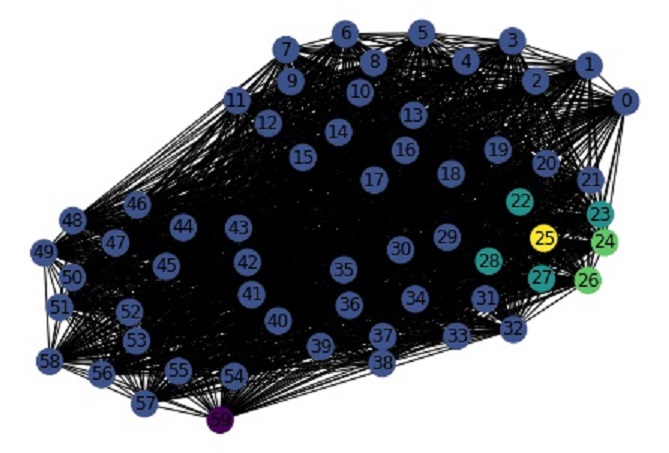}
\caption*{\textbf{(D)\,}\footnotesize Y, G, $0.13$, T, $0.17$, L, $0.212$, P, $0.404327$}
%\end{figure}
\end{subfigure}
%\begin{figure}[h]
\begin{subfigure}[h]{0.45\textwidth}
\includegraphics[width=\textwidth]{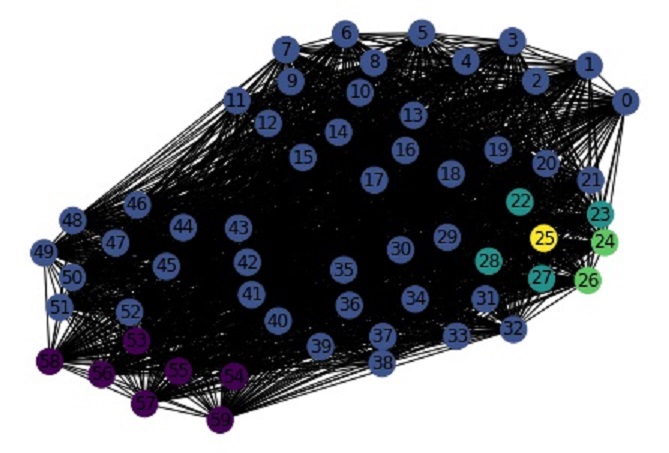}
\caption*{\textbf{(F)\,}\footnotesize Y, $0.0169492$, G, $0.037037$, T, $0.111111$, L, $0.333333$, P, $1$}
\end{subfigure}
%\end{figure}
\newline
%\begin{figure}[h]
\begin{subfigure}[h]{0.45\textwidth}
\includegraphics[width=8cm]{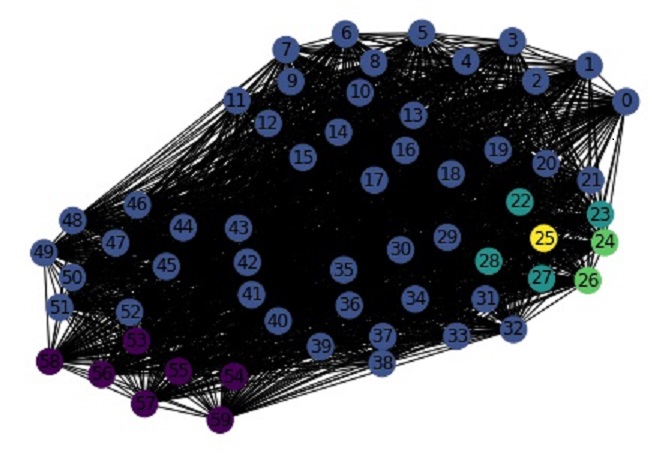}
\caption*{\textbf{(E)\,}\footnotesize Y, G, $1$, T, $3$, L, $27$, P, $59$}
\end{subfigure}
\caption{\textbf{Center at $25$}}\label{fig:BallsCenter25}
\end{figure}

It is worthy noticing that the sequence of raddi for (D) has been chosen in such a way that the $d_t$ balls become as close as possible to Euclidean balls. At least for this simple situation, of a kernel defined by a metric, the metrization scheme, (F), introduced here seems to reproduce the exact shapes of the balls associated to the metric defining the kernel. It could be argued that the exponential character of Frink's construction provides only a few balls of the graph. Nevertheless we know from the very proof of our  main result that we have at hand changing the initial parameter $\Lambda<\Lambda_\infty$ to produce a profuse diversity of sequences $\lambda(i)$. Another somehow arbitrary step of the algorithm is the use of the main three diagonal of our affinity matrix $K$. Starting with the main five diagonals will produce another family of F-balls and annuli.

%%%%%%%%%%%%%%%% REFERENCES %%%%%%%%%%%%%%%%%%%%%%%%%%%%%%%%%%%%%%%%%%%%%%%%%%%%%%%%

%\bibliographystyle{amsalpha}
%\bibliography{ref}

\newcommand{\etalchar}[1]{$^{#1}$}
\providecommand{\bysame}{\leavevmode\hbox to3em{\hrulefill}\thinspace}
\providecommand{\MR}{\relax\ifhmode\unskip\space\fi MR }
% \MRhref is called by the amsart/book/proc definition of \MR.
\providecommand{\MRhref}[2]{%
	\href{http://www.ams.org/mathscinet-getitem?mr=#1}{#2}
}
\providecommand{\href}[2]{#2}

%%%%%%%%%%%%%%%% Affiliations and Address Authors %%%%%%%%%%%%%%%%%%%%%%%%%%%%%%%%%%

%\bigskip

\bigskip

\noindent{\footnotesize Mar\'ia Florencia Acosta, Hugo Aimar, and Ivana G\'omez.
	\textsc{Instituto de Matem\'{a}tica Aplicada del Litoral, CONICET, UNL.}
	
	%
	%\smallskip
%	\noindent \textit{Address.} \textmd{CCT CONICET Santa Fe, Predio ``Alberto Cassano'', Colectora Ruta Nac.~168 km 0, Paraje El Pozo, S3007ABA Santa Fe, Argentina.}
%}
%
\medskip

\noindent \textit{Address.} \textmd{IMAL, CCT CONICET Santa Fe, Predio ``Alberto Cassano'', Colectora Ruta Nac.~168 km~0, Paraje El Pozo, S3007ABA Santa Fe, Argentina.}
}

\bigskip
\end{document}